\def \Dn{{\mathcal D}_n}
\def \Gn{{\mathcal G}_n}
\def \Tn{{\mathcal T}_n}
\newtheorem{theorem}{Theorem}[section]
\newtheorem{prop}[theorem]{Proposition}
\newtheorem{lemma}{Lemma}[section]
\newtheorem{corollary}{Corollary}[section]
\newtheorem{example}{Example}[section]
\newtheorem{definition}{Definition}[section]
\newtheorem{conjecture}[theorem]{Conjecture}
\newtheorem{question}[theorem]{Question}
\def \dd{\hfill \baseb \vskip .5cm}
\def \d{{\noindent \it Proof. } }
\def\ex{\begin{example}}
\def\eex{\end{example}}
\def\exx{\end{example}}
\def\t{\begin{theorem}}
\def\tt{\end{theorem}}
\def\D{\begin{definition}}
\def\DD{\end{definition}}
\def\l{\begin{lemma}}
\def\ll{\end{lemma}}
\def\c{\begin{corollary}}
\def\cc{\end{corollary}}
\def\cj{\begin{conjecture}}
\def\cjj{\end{conjecture}}
\def\e{\begin{equation}}
\def\ee{\end{equation}}
\def\p{\begin{prop}}
\def\pp{\end{prop}}
\def\q{\begin{question}}
\def\qq{\end{question}}
\def \Dn{{\mathcal D}_n}
\def \Gn{{\mathcal G}_n}
\def \Tn{{\mathcal T}_n}
\def \inv{^{-1}}
\newcommand{\baseb}{\hfill \rule{2mm}{2mm}} 
\begin{document}

\baselineskip 15pt

\title{{\bf (2,3)-Cordial Trees and Paths}\thanks{AMS Classification number: 05C20, 05C38, 05C78
} \thanks{Key words and phrases: orientation of an undirected graph, graph labeling, cordial labeling, (2,3)-cordial digraph
}}

\author{Manuel Santana\thanks{corresponding Author}\\ Department of Mathematics and Statistics \\Utah State University \\ Logan, Utah 84322-3900, U.S.A\\manuelarturosantana@gmail.com\\  \\  Jonathan Mousley \\ Department of Mathematics and Statistics \\Utah State University \\ Logan, Utah 84322-3900, U.S.A \\jonathanmousley@gmail.com\\ \\Dave Brown \\ Department of Mathematics and Statistics \\Utah State University \\ Logan, Utah 84322-3900, U.S.A\\ david.e.brown@usu.edu\\ \\ LeRoy B. Beasley\\ Department of Mathematics and Statistics \\Utah State University \\ Logan, Utah 84322-3900, U.S.A\\ leroy.b.beasley@aggiemail.usu.edu}
\date{}

\maketitle

\begin{abstract}  Recently L. B. Beasley introduced $(2,3)$-cordial labelings of directed graphs in \cite{B}.  He conjectured that every orientation of a path of length at least five is $(2,3)$-cordial, and that every tree of max degree $n =3$ has a cordial orientation. In this paper we formally define $(2,3)-$cordiality from the viewpoint of {\it quasigroup} cordiality. We show both conjectures to be false, discuss the $(2,3)$-cordiality of orientations of the Petersen graph, and establish an upper bound for the number of edges a graph can have and still be $(2,3)$-orientable. 
\end{abstract}

\section{Introduction}

Let $G = (V,E)$ be an undirected graph with vertex set $V$ and edge set $E$, a convention we will use throughout this paper.  A $(0,1)$-labelling of the vertex set is a mapping $f:V\to \{0,1\}$  and is  said to be {\em friendly} if approximately one half of the vertices are labelled 0 and the others labelled 1. An induced labelling of the edge set  is a mapping $g:E\to \{0,1\}$ where for an edge $uv, g(uv)= \hat{g}(f(u),f(v))$ for some $\hat{g}:\{0,1\}\times\{0,1\}\to \{0,1\}$ and is said to be cordial if $f$ is friendly and about one half the edges of $G$ are labelled 0.  A graph, $G$,  is called {\em cordial} if there exists a cordial induced labelling of the edge set of $G$.  

In this article we investigate a cordial labelling of directed graphs that is not  merely a cordial labelling of the underlying undirected graph. This labeling was introduced by L. B. Beasley in \cite{B}. Let $D=(V,A)$ be a directed graph with vertex set $V$ and arc set $A$, with a $(0,1)$ vertex set mapping $f:V\to\{0,1\}$. Let  $g:A\to \{-1,0,1\}$ be the induced  labeling of the arcs of $D$  such that for any $\overrightarrow{uv}$, by which we mean an arc going from $u$ to $v$, $g(\overrightarrow{uv}) = f(v) - f(u)$.   The digraph $D$ is said to be $(2,3)$-cordial if there exists a friendly labeling on $D$ with this induced labeling on the arc set such that approximately one third of the arcs receive each labeling. Applications of balanced graph labelings can be found in the introduction of \cite{SD}.

In this paper we will formally define $(2,3)$-cordiality starting from the view of quasi-groups, resolve two conjecture posed in \cite{B}, both in the negative, and discuss the $(2,3)$ cordiality of the Petersen graph and complete graphs.

\section{Preliminaries}
 In \cite{H} Hovey introduced $A$ cordial graphs, with vertex labeling of an abelian group $A$. We use the short hand that for any $a \in A$ that $V_a, E_a$ is the number of vertices or edges labeled $a$ respectively We repeat the definition here, and then proceed to generalize it to the directed graph case. 
\D A labeling function $f$ is said to be balanced if it is surjective, and if for all $a$ and $b$ in the image of $f$, $||f\inv(a)| - |f\inv(b)|| \le 1$. 
 
\DD
 
\D Let $A$ be an abelian group. A graph $G$  is is $A -$cordial if there is a balanced labeling $f:V \to A$ that 
induces an edge labeling $f(a,b) = f(a) + f(b)$ such that the vertex labeling and edge labeling are balanced.
\DD

In \cite{PW} Penchenik and Wise generalized this idea by introducing {\it quasi-group cordiality}. A {\it quasi-group} $Q$ is a set with binary operation $\cdot$ such that for all $a,b \in Q$ there exist unique $c,d \in Q$ such that $a \cdot c = b$ and $d \cdot a = b$. Particularly all (non-abelian) groups are quasi groups. They offer the following definition for quasigroup cordiality.
\D\label{quaigroup} Let $Q$ be a quasi-group and $G$ a directed graph. A labeling $f:V \to Q$ induces a labeling of the arcs in the following way. If $(a,b)$ is an arc with head $a$, then $f(a,b) = f(a) \cdot f(b)$. If there is a balanced vertex labeling of $G$ that induces a balanced edge labeling of $G$, then we say that $G$ is $Q-cordial$.
\DD

We extend this now to a different form of quasi-group Cordiality defined as thus.
\D Let $Q$ be a quasi-group with subset $\mathbb{Q}$ and $G$ a directed graph. A labeling $f:V \to \mathbb{Q}$ induces an arc labeling as in definition \ref{quaigroup}. If there is a balanced vertex labeling of $G$ that induces a balanced arc labeling of $G$, then we say $G$ is $(\mathbb{Q},Q)-cordial$.
\DD

 In this paper consider only the simplest case $(\mathbb{Z}_2,\mathbb{Z}_3^-)-cordial$ defined by Beasly as $(2,3)-$cordial in \cite{B}. We will offer the formal definition of $(2,3)-$cordial here.

\D A labeling $f: V \to \{0,1\}$ is said to be friendly if it is balanced. 
\DD



\D Let $\Dn$ be the set of all digraphs on $n$ vertices. We will define $\Tn$ as the subset of $\Dn$ that consists of all digon-free digraphs, where a digon is a two cycle on a digraph.\DD

\D Let $D\in \Tn$ with $D=(V,A)$. Let $f:V\to \{0,1\}$ be a friendly labeling of the vertex set $V$ of $D$. Let $g:A\to\{-1,0,1\}$ be an induced labeling of the arcs of $D$ such that for any $i,j\in\{-1,0,1\}$, $-1\leq |g^{-1}(i)|-|g^{-1}(j)| \leq 1$. Such a labeling is called a {\em$(2,3)$-cordial} labeling, and a digraph $D\in\Tn$ that can possess a $(2,3)$-cordial labeling will be called a {\em$(2,3)$-cordial} digraph.
\DD

\D Let $D=(V,A)$ be a digraph with vertex labelling $f:V\to \{0,1\}$ and with induced arc labelling $g:A\to\{-1,0,1,\}$.  Define $\Gamma_{f,g}$ to be the real triple $\Gamma_{f,g}(D)=(\alpha,\beta,\gamma)$ where $\alpha=|g^{-1}(1)|, \beta=|g^{-1}(-1)|,$ and $\gamma =|g^{-1}(0)|$.  \DD

Let $D\in\Tn$ and let $D^r$ be the digraph such that every arc of $D$ is reversed, so that $\overrightarrow{uv}$ is an arc in $D^r$ if and only if $\overrightarrow{vu}$ is an arc in $D$.  Let $f$ be a $(0,1)$-labeling of the vertices of $D$ and let $g(\overrightarrow{uv}) =f(v)-f(u)$ so that $g$ is a $(-1,0,1)$-labeling of the arcs of $D$.  Let $\overline{f}$ be the complementary $(0,1)$-labeling of the vertices of $D$, so that $\overline{f}(v)=0$ if and only if $f(v)=1$.  Let $\overline{g}$ be the corresponding induced arc labeling of $D$, $\overline{g}(\overrightarrow{uv}) =\overline{f}(v)-\overline{f}(u)$.

\l \label{lab} Let $D\in\Tn$ with vertex labeling $f$ and induced arc labeling $g$.  Let $\Gamma_{f,g}(D)=(\alpha,\beta,\gamma)$.   Then \begin{enumerate}\item $\Gamma_{f,g}(D^r)=(\beta,\alpha,\gamma)$. \item $\Gamma_{\overline{f},\overline{g}}(D)=(\beta,\alpha,\gamma)$, and \item $\Gamma_{\overline{f},\overline{g}}(D^R)=\Gamma_{f,g}(D).$\end{enumerate} \ll
\d If an arc is labeled 1, -1, 0 respectively then reversing the labeling of the incident vertices gives a labeling of -1, 1, 0 respectively,  If an arc $\overrightarrow{uv}$ is labeled 1, -1, 0 respectively, then $\overrightarrow{vu}$ would be labeled -1, 1, 0 respectively. \dd

Also in this article we will study when undirected graphs can have their arcs given an orientation such that the resulting graph is $(2,3)-$cordial. We finish this section with a couple of definitions for that.

\D Define $\Gn$ to be the set of all simple, undirected, connected graphs. We say $G \in \Gn$ has vertex set $V$ and edge set $E$ and denote it by $G = (V,E).$
\DD

\D Let $G\in\Gn$.  An {\em orientation} of $G$ is a digraph $D(G)$ whose vertex set is the same as the vertex set of $G$ and whose arc set consists of the same number of arcs as the number of edges of $G$ such that given an edge $\{u,v\}$ of $G$, either $\overrightarrow{uv}$ or $\overrightarrow{vu}$ is an arc of $D(G)$ but not both, so that $D(G)$ is digon free.  A graph $G$ is said to be {\em $(2,3)$-orientable} if there exists and orientation of $G$, $D(G)$, that  is $(2,3)$-cordial.\DD

\section{Resolution of Two Conjectures}

We begin with the first conjecture.

\noindent
\textbf{Conjecture 1}\cite[Conjecture 4.1]{B}: Every 
orientation of every path is (2,3) cordial except for a path with four vertices.

The orientation in Figure 1 of the ten path has no cordial labeling.

\begin{figure}[!h]\label{10path}
\centering
\begin{tikzpicture}
    \node[shape=circle,draw=black] (A) at (0,0) {};
    \node[shape=circle,draw=black] (B) at (1,0) {};
    \node[shape=circle,draw=black] (C) at (2,0) {};
    \node[shape=circle,draw=black] (D) at (3,0) {};
    \node[shape=circle,draw=black] (E) at (4,0) {};
    \node[shape=circle,draw=black] (F) at (5,0) {};
    \node[shape=circle,draw=black] (G) at (6,0) {};
    \node[shape=circle,draw=black] (H) at (7,0) {};
    \node[shape=circle,draw=black] (I) at (8,0) {};
    \node[shape=circle,draw=black] (J) at (9,0) {} ;
\begin{scope}[>={Stealth[black]},
              every edge/.style={draw}];
    \path [->] (A) edge node[left] {} (B);
    \path [->](C) edge node[right] {} (B);
    \path [->](C) edge node[left] {} (D);
    \path [->](C) edge node[left] {} (D);
    \path [->](E) edge node[right] {} (D);
    \path [->](E) edge node[left] {} (F);
    \path [->](G) edge node[right] {} (F);
    \path [->](G) edge node[right] {} (H);
    \path [->](I) edge node[left] {} (H);
    \path [->](I) edge node[right] {} (J);
\end{scope}
\end{tikzpicture}\\
\label{tenpath}
\caption{}
\end{figure}

We used the following brute force algorithm to test to see if a certain arc orientation is cordial on a friendly labeling of a graph.\\
\begin{centering}
\begin{algorithm}[H]
\KwData{Arcs, Verticies}
\KwResult{Determine if an orientation is cordial on a path}
\For{arc in Arcs}{
    current = first vertex\\
    next = second vertex\\
    \If{arc is left}{
    edgeLabel = current - next
    }
    \Else{
    edgeLabel = current - next}
    store edge label\\
    current = next\\
    next = next vertex\\
}
\If{edge labels are cordial}{return it is cordial}
\end{algorithm}
\end{centering}

In investigating the conjecture we had to test every possible friendly labeling and arc orientation on ten vertices. If we let $n$ denote
the number of vertices on the path, then checking every possible friendly labeling against every arc set has complexity of $O(2^k)$. As a slight optimization by  Lemma \ref{lab} with out loss of generality we can fix the first arc and the first label and still account for all cases up to isomorphism. That means there will be $2^{n-2}$ arc orientations to test. In calculating all arc orientations of the ten path we found the only orientation that is not $(2,3) -$coridal is the one in figure \ref{tenpath}.  The next known case of a non $(2,3) -$orientable path is one on 22 vertices with the same alternating arc structure.

\noindent 
\textbf{Conjecture 2}\cite[Conjecture2.3]{B}: Every tree of max degree $3$ is $(2,3)-$orientable.\\
\begin{figure}[!h]
\centering
\begin{tikzpicture}
    \node[shape=circle,draw=black] (A) at (0,0) {};
    \node[shape=circle,draw=black] (B) at (1,0) {};
    \node[shape=circle,draw=black] (C) at (2,0) {};
    \node[shape=circle,draw=black] (D) at (3,0) {};
    \node[shape=circle,draw=black] (E) at (4,0) {};
    \node[shape=circle,draw=black] (F) at (5,0) {};
    \node[shape=circle,draw=black] (G) at (1,1) {};
    \node[shape=circle,draw=black] (H) at (2,1) {};
    \node[shape=circle,draw=black] (I) at (3,1) {};
    \node[shape=circle,draw=black] (J) at (4,1) {} ;
    \path [-] (A) edge node[left] {} (B);
    \path [-](C) edge node[right] {} (B);
    \path [-](C) edge node[left] {} (D);
    \path [-](E) edge node[right] {} (D);
    \path [-](E) edge node[left] {} (F);
    \path [-](B) edge node[right] {} (G);
    \path [-](C) edge node[right] {} (H);
    \path [-](D) edge node[left] {} (I);
    \path [-](E) edge node[right] {} (J);

\end{tikzpicture}\\
\caption{}
\end{figure}
Figure 2 is a counter example. Though easily proved with a computer by a similar argument as on the one above, we will prove by cases.
\begin{proof}
First note that the value of $\gamma$ in a vertex labeling of the graph does not depend on arc orientation. Thus we show that there does not exist a friendly labeling on the vertex set of the graph that induces an edge labeling of $\gamma = 3$, by only considering vertex labelings on the undirected graph.
We will separate our graph into a right and left subgraph and then connect them
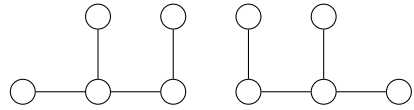
\begin{figure}[!h]
\centering
\begin{tikzpicture}
    \node[shape=circle,draw=black] (A) at (0,0) {};
    \node[shape=circle,draw=black] (B) at (1,0) {};
    \node[shape=circle,draw=black] (C) at (2,0) {};
    \node[shape=circle,draw=black] (D) at (3,0) {};
    \node[shape=circle,draw=black] (E) at (4,0) {};
    \node[shape=circle,draw=black] (F) at (5,0) {};
    \node[shape=circle,draw=black] (G) at (1,1) {};
    \node[shape=circle,draw=black] (H) at (2,1) {};
    \node[shape=circle,draw=black] (I) at (3,1) {};
    \node[shape=circle,draw=black] (J) at (4,1) {} ;
    \path [-] (A) edge node[left] {} (B);
    \path [-](C) edge node[right] {} (B);
    \path [-](E) edge node[right] {} (D);
    \path [-](E) edge node[left] {} (F);
    \path [-](B) edge node[right] {} (G);
    \path [-](C) edge node[right] {} (H);
    \path [-](D) edge node[left] {} (I);
    \path [-](E) edge node[right] {} (J);

\end{tikzpicture}\\
\caption{Left and Right Subgraph}
\end{figure}


Let $x$ be the number of vertices labeled $1$ on one subgraph in a friendly labeling of the whole graph. In order for the graph to be cordial $x$ cannot be $4$ or $5$ since this would make $\gamma > 3$ on the entire graph. Therefore $x$ also cannot be $0$ or $1$ since that would mean $x$ would be too large on the other sub graph.  Thus we must have $x = 2$ on one subgraph $x = 3$ on the other subgraph.\\

Given these constraints it is not possible to have a sub graph such that $\gamma = 0$, since there is no way to label two of the vertices on a subgraph without having at least two vertices of the other label connected. This means we do not need to account for the case when $\gamma=3$ on a subgraph, since the other subgraph cannot have $\gamma = 0$.
Now we will consider two cases.\\
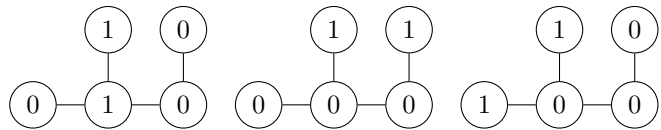
\begin{figure}[!h]
\centering
\begin{tikzpicture}
    \node[shape=circle,draw=black] (A) at (0,0) {0};
    \node[shape=circle,draw=black] (B) at (1,0) {1};
    \node[shape=circle,draw=black] (C) at (2,0) {0};
    \node[shape=circle,draw=black] (G) at (1,1) {1};
    \node[shape=circle,draw=black] (H) at (2,1) {0};
    \path [-] (A) edge node[left] {} (B);
    \path [-](C) edge node[right] {} (B);
    \path [-](B) edge node[right] {} (G);
    \path [-](C) edge node[right] {} (H);
    
    \node[shape=circle,draw=black] (one) at (3,0) {0};
    \node[shape=circle,draw=black] (two) at (4,0) {0};
    \node[shape=circle,draw=black] (three) at (5,0) {0};
    \node[shape=circle,draw=black] (four) at (4,1) {1};
    \node[shape=circle,draw=black] (five) at (5,1) {1};
    \path [-] (one) edge node[left] {} (two);
    \path [-](three) edge node[right] {} (two);
    \path [-](two) edge node[right] {} (four);
    \path [-](three) edge node[right] {} (five);
    
    \node[shape=circle,draw=black] (one) at (6,0) {1};
    \node[shape=circle,draw=black] (two) at (7,0) {0};
    \node[shape=circle,draw=black] (three) at (8,0) {0};
    \node[shape=circle,draw=black] (four) at (7,1) {1};
    \node[shape=circle,draw=black] (five) at (8,1) {0};
    \path [-] (one) edge node[left] {} (two);
    \path [-](three) edge node[right] {} (two);
    \path [-](two) edge node[right] {} (four);
    \path [-](three) edge node[right] {} (five);
\end{tikzpicture}\\
\caption{All $\gamma = 2$ subgraph labelings}
\end{figure}
\begin{figure}[!h]
\centering
\begin{tikzpicture}
    \node[shape=circle,draw=black] (A) at (3,0) {0};
    \node[shape=circle,draw=black] (B) at (1,0) {0};
    \node[shape=circle,draw=black] (C) at (2,0) {1};
    \node[shape=circle,draw=black] (G) at (1,1) {1};
    \node[shape=circle,draw=black] (H) at (2,1) {1};
    \path [-] (A) edge node[left] {} (C);
    \path [-](C) edge node[right] {} (B);
    \path [-](B) edge node[right] {} (G);
    \path [-](C) edge node[right] {} (H);
    
    \node[shape=circle,draw=black] (one) at (6,0) {1};
    \node[shape=circle,draw=black] (two) at (4,0) {0};
    \node[shape=circle,draw=black] (three) at (5,0) {0};
    \node[shape=circle,draw=black] (four) at (4,1) {1};
    \node[shape=circle,draw=black] (five) at (5,1) {1};
    \path [-] (one) edge node[left] {} (three);
    \path [-](three) edge node[right] {} (two);
    \path [-](two) edge node[right] {} (four);
    \path [-](three) edge node[right] {} (five);
    
    \node[shape=circle,draw=black] (one) at (9,0) {1};
    \node[shape=circle,draw=black] (two) at (7,0) {0};
    \node[shape=circle,draw=black] (three) at (8,0) {1};
    \node[shape=circle,draw=black] (four) at (7,1) {1};
    \node[shape=circle,draw=black] (five) at (8,1) {0};
    \path [-] (one) edge node[left] {} (three);
    \path [-](three) edge node[right] {} (two);
    \path [-](two) edge node[right] {} (four);
    \path [-](three) edge node[right] {} (five);
\end{tikzpicture}\\
\caption{All $\gamma = 1$\hbox{ subgraph labelings}}
\end{figure}
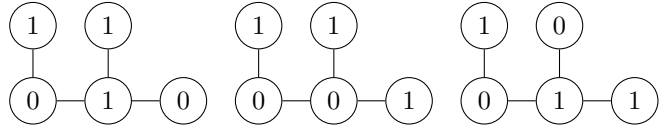
\textbf{Case 1.} With out loss of generality let the left subgraph have $\gamma = 1$, $x=2$ and the right subgraph have $\gamma = 2$, $x=3$. This would mean we would need to connect the subgraphs such that the connecting edge will not be labeled $0$. Considering all cases we see that there is no way to connect the two subgraphs to make the full graph without $\gamma = 4$ on the full graph.\\
\textbf{Case 2.} With out loss of generality let the left subgraph have $\gamma =1$, $x=2$, and the right subgraph have $\gamma=1$, $x = 3$. This would mean we need to find a way to connect the two subgraphs such that the connecting edge is labeled 0. Considering all cases shows that this is not possible.
\end{proof}

The structure of the proof leads to the following theorem.
\t\label{Theor: Zeros} Let $G \in \Gn$. We define $\Lambda (G)$ to be the number of edges, $uv$ such that vertices $u$ and $v$ have the same label for a given friendly labeling on $G$. $G$ is $(2,3)$-orientable if and only if there exsits a friendly vertex labling on $G$ such that $\Lambda (G) = \left\lceil\frac{1}{3} |E| \right \rceil$ or $\Lambda (G) = \left\lfloor \frac{1}{3} |E| \right \rfloor$, where $|E|$ is the cardinality of the edge set of $G$

\tt
\begin{proof}
Suppose $G$ satisfies $\Lambda(G) =\left\lceil\frac{1}{3} |E| \right \rceil$ or $\Lambda (G) = \left\lfloor \frac{1}{3} |E| \right \rfloor$. This would mean about $\frac{2}{3}$ of the edges are connected by vertices of different labels, and therefore arcs may be assigned such that $G$ is $(2,3)-$cordial. If $G$ is $(2,3)-$cordial, then clearly $G$ has a friendly labeling that satisfies the above conditions.
\end{proof}

Though fairly intuitive we will now use theorem 3.1 to show two more results that stem from it in the following section.
\section{Theorem 3.1 applied.}
\begin{theorem}\label{theor: gamma theorem}
The Petersen Graph, Figure \ref{fig: Petersen Graph}, is not (2,3)
-orientable. 
\end{theorem}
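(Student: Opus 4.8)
The plan is to reduce the claim to a single parity observation by way of Theorem~\ref{Theor: Zeros}. The Petersen graph is $3$-regular with $|V|=10$ and $|E|=15$, so $\left\lceil \frac{1}{3}|E|\right\rceil = \left\lfloor \frac{1}{3}|E|\right\rfloor = 5$. By Theorem~\ref{Theor: Zeros}, then, the Petersen graph is $(2,3)$-orientable if and only if some friendly labeling yields $\Lambda = 5$. My goal is therefore to show that no friendly labeling can make $\Lambda$ equal to $5$, which will rule out $(2,3)$-orientability without examining any orientation directly.

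First I would fix an arbitrary friendly labeling $f$ and set $S=f^{-1}(1)$ and $\bar S=f^{-1}(0)$; since $|V|=10$ and $f$ is friendly (hence balanced), $|S|=|\bar S|=5$. Write $e(S)$ and $e(\bar S)$ for the numbers of edges with both endpoints in $S$, respectively $\bar S$, and let $c$ denote the number of edges in the cut between $S$ and $\bar S$. By definition $\Lambda = e(S)+e(\bar S)$, while $e(S)+e(\bar S)+c = 15$.

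The key step is a degree count. Summing vertex degrees over $S$ counts each internal edge of $S$ twice and each cut edge once, so $3|S| = 2\,e(S)+c$; likewise $3|\bar S| = 2\,e(\bar S)+c$. Because $|S|=|\bar S|=5$, subtracting the two identities gives $e(S)=e(\bar S)$, whence $\Lambda = e(S)+e(\bar S) = 2\,e(S)$ is even for \emph{every} friendly labeling. Since the target value $5$ is odd, no friendly labeling can attain $\Lambda = 5$, and by Theorem~\ref{Theor: Zeros} the Petersen graph is not $(2,3)$-orientable.

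I expect no genuine obstacle here: the argument never uses any structural feature of the Petersen graph beyond its being cubic on ten vertices, so the only things to get right are the degree-counting identity $3|S| = 2\,e(S)+c$ and the clash between the forced evenness of $\Lambda$ and the oddness of $\left\lceil \frac{1}{3}|E|\right\rceil = 5$. It is worth noting as a corollary of the same reasoning that \emph{any} cubic graph on ten vertices fails to be $(2,3)$-orientable, since $\Lambda$ is always even under a friendly labeling while $|E|/3 = 5$ is odd; the Petersen graph is simply the instance named in the statement.
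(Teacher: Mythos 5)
Your proof is correct, but it is a genuinely different argument from the paper's. Both proofs start the same way, invoking Theorem~\ref{Theor: Zeros} to reduce the problem to showing that no friendly labeling achieves $\Lambda = \lceil 15/3 \rceil = \lfloor 15/3 \rfloor = 5$. From there the paper proceeds structurally: it splits the Petersen graph into the outer $5$-cycle and the inner pentagram, enumerates (up to isomorphism) the friendly labelings putting two or three $1$'s on each part together with their possible $\gamma$ values, and then checks that no way of attaching the five spoke edges produces $\gamma = 5$ in total. You instead use a handshake-lemma parity argument: with $|S| = |\bar S| = 5$, the identities $3|S| = 2e(S) + c$ and $3|\bar S| = 2e(\bar S) + c$ force $e(S) = e(\bar S)$, so $\Lambda = e(S) + e(\bar S)$ is even and can never equal the odd target $5$. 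Your route is shorter, avoids the ``upon inspection'' exhaustion of cases that the paper leans on, and proves strictly more: as you note, \emph{every} cubic graph on ten vertices fails to be $(2,3)$-orientable, since the argument uses only $3$-regularity and $|V| = 10$. What the paper's approach buys in exchange is concreteness --- it exhibits the actual near-miss labelings of the two $5$-cycles --- and its decomposition technique mirrors the counterexample analysis of Section~3, but as a proof of this theorem it is dominated by yours. One caution if you pursue the generalization: the parity obstruction only applies when $\lfloor |E|/3 \rfloor$ and $\lceil |E|/3 \rceil$ are both odd (here they coincide at $5$ because $15 \equiv 0 \pmod 3$); when $3 \nmid |E|$ the two admissible values of $\Lambda$ have opposite parities and the argument cannot rule out both.
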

\begin{proof}
By Theorem \ref{Theor: Zeros} only need to show that there is no friendly labeling such that one third of the edges are connected with the same label of the vertex. Again let $x$ be the number of vertices labeled 1. We will start by dividing the Petersen graph into two subgraphs and then connect them.

\begin{figure}[h]
    \centering
    \includegraphics{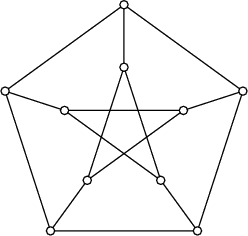}
    \caption{The Petersen Graph}
    \label{fig: Petersen Graph}
\end{figure}

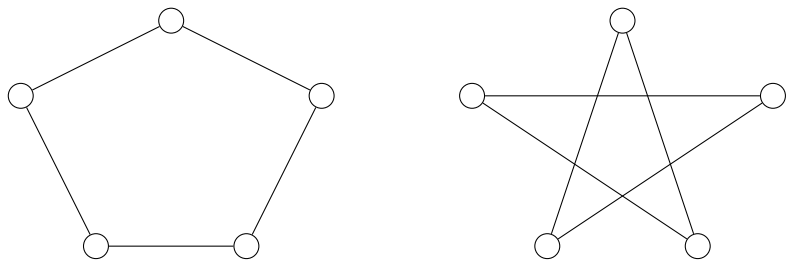
\begin{figure}[!h]
\centering
\begin{tikzpicture}
    \node[shape=circle,draw=black] (A) at (0,2) {};
    \node[shape=circle,draw=black] (B) at (-2,1) {};
    \node[shape=circle,draw=black] (C) at (-1,-1) {};
    \node[shape=circle,draw=black] (D) at (1,-1) {};
    \node[shape=circle,draw=black] (E) at (2,1) {};
    \path [-] (A) edge node[left] {} (B);
    \path [-] (B) edge node[left] {} (C);
    \path [-] (C) edge node[left] {} (D);
    \path [-] (D) edge node[left] {} (E);
    \path [-] (E) edge node[left] {} (A);
    
    \node[shape=circle,draw=black] (A) at (6,2) {};
    \node[shape=circle,draw=black] (B) at (4,1) {};
    \node[shape=circle,draw=black] (C) at (5,-1) {};
    \node[shape=circle,draw=black] (D) at (7,-1) {};
    \node[shape=circle,draw=black] (E) at (8,1) {};
    \path [-] (A) edge node[left] {} (C);
    \path[-] (A) edge node[left] {} (D);
    \path [-] (E) edge node[left] {} (B);
    \path [-] (E) edge node [left] {} (C);
    \path [-] (B) edge node [left] {} (D);
\end{tikzpicture}
\caption{Two Subgraphs of the Petersen Graph}
\end{figure}
With out loss of generality the only way for our friendly label to be cordial we must have one sub graph have $x = 2$, and have $x = 3$ on the other. The case when either sub graph has $x \geq 4$ of a certain label would result in $\gamma$ being too large. Let us first consider the outer subgraph letting $x = 3$ for this subgraph.\\

\begin{figure}[!h]
\centering
\begin{tikzpicture}
    \node[shape=circle,draw=black] (A) at (0,2) {1};
    \node[shape=circle,draw=black] (B) at (-2,1) {1};
    \node[shape=circle,draw=black] (C) at (-1,-1) {0};
    \node[shape=circle,draw=black] (D) at (1,-1) {1};
    \node[shape=circle,draw=black] (E) at (2,1) {0};
    \path [-] (A) edge node[left] {} (B);
    \path [-] (B) edge node[left] {} (C);
    \path [-] (C) edge node[left] {} (D);
    \path [-] (D) edge node[left] {} (E);
    \path [-] (E) edge node[left] {} (A);
    
    \node[shape=circle,draw=black] (A) at (6,2) {0};
    \node[shape=circle,draw=black] (B) at (4,1) {0};
    \node[shape=circle,draw=black] (C) at (5,-1) {1};
    \node[shape=circle,draw=black] (D) at (7,-1) {1};
    \node[shape=circle,draw=black] (E) at (8,1) {1};
    \path [-] (A) edge node[left] {} (B);
    \path [-] (B) edge node[left] {} (C);
    \path [-] (C) edge node[left] {} (D);
    \path [-] (D) edge node[left] {} (E);
    \path [-] (E) edge node[left] {} (A);
\end{tikzpicture}\\
\caption{$\gamma=1,\gamma=3$}
\label{fig: petersen subs}
\end{figure}
There are two cases as shown in Figure \ref{fig: petersen inner}, up to isomorphism. $\gamma = 2$ is not possible since there is no way to label two 0's and two 1's without the third 1 connecting to another vertex labeled one. This means we need to connect the star sub graph such exactly two or exactly four more edges with zero are produced.\\
\begin{figure}[!h]
\centering
\begin{tikzpicture}
    \node[shape=circle,draw=black] (A) at (0,2) {1};
    \node[shape=circle,draw=black] (B) at (-2,1) {1};
    \node[shape=circle,draw=black] (C) at (-1,-1) {0};
    \node[shape=circle,draw=black] (D) at (1,-1) {0};
    \node[shape=circle,draw=black] (E) at (2,1) {0};
    \path [-] (A) edge node[left] {} (C);
    \path[-] (A) edge node[left] {} (D);
    \path [-] (E) edge node[left] {} (B);
    \path [-] (E) edge node [left] {} (C);
    \path [-] (B) edge node [left] {} (D);
    
    \node[shape=circle,draw=black] (A) at (6,2) {1};
    \node[shape=circle,draw=black] (B) at (4,1) {0};
    \node[shape=circle,draw=black] (C) at (5,-1) {1};
    \node[shape=circle,draw=black] (D) at (7,-1) {0};
    \node[shape=circle,draw=black] (E) at (8,1) {0};
    \path [-] (A) edge node[left] {} (C);
    \path[-] (A) edge node[left] {} (D);
    \path [-] (E) edge node[left] {} (B);
    \path [-] (E) edge node [left] {} (C);
    \path [-] (B) edge node [left] {} (D);
\end{tikzpicture}\\
\caption{$\gamma=1, \gamma=3$}
\label{fig: petersen inner}
\end{figure}
Figure 8 represents all labelings of the inner subgraph up to isomorphism, with $\gamma = 1$ or $\gamma = 3$ possible on the subgraph. Upon inspection there is no possible way to connect any rotation of  the inner sub graph with either of the outer sub graphs such that $\gamma = 5$ for the resulting Petersen graph. Therefore by Theorem \ref{theor: gamma theorem} the Petersen graph is not $(2,3)$ cordial.
\end{proof}
One more example shows how this theorem proved useful in proving an upper bound on the size of the edge set for any graph to be $(2,3)$-orientable. For work on the upper bounds for other graph labelings see \cite{M}.
\t Given a directed graph $G = (V,E)$ with vertex set $V$ and $n = |V|$ with $n \geq 6$, and edge set $E$. The maximum size of $E$ such that $G$ is $(2,3)$ orientable for any given $n$ is
\tt
\begin{align}
    |E|_{max} &= \displaystyle{n\choose2} - Z + \left\lceil \frac{1}{2}\left( \displaystyle{n\choose2} - Z \right)  \right\rceil\\\label{eqn: Z eqn}
    Z &= \displaystyle{\lceil \frac{n}{2}  \rceil\choose 2} + \displaystyle{\lfloor \frac{n}{2}  \rfloor\choose 2}. 
\end{align}
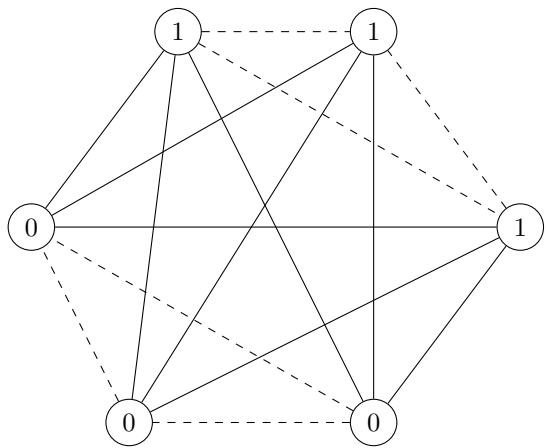
\begin{figure}
    \centering
    \begin{tikzpicture}[scale=0.65]
     \node[shape=circle,draw=black] (A) at (2,-4) {0};
     \node[shape=circle,draw=black] (B) at (-3,-4) {0};
     \node[shape=circle,draw=black] (C) at (-5,0) {0};
     \node[shape=circle,draw=black] (D) at (-2,4) {1};
     \node[shape=circle,draw=black] (E) at (2,4) {1};
     \node[shape=circle,draw=black] (F) at (5,0) {1};
     \draw[dashed] (A) -- (B);
     \draw[dashed] (A) -- (C);
     \path [-] (A) edge node[left] {} (D);
     \path [-] (A) edge node[left] {} (E);
     \path [-] (A) edge node[left] {} (F);
     
     \draw[dashed] (B) -- (C);
     \path [-] (B) edge node[left] {} (D);
     \path [-] (B) edge node[left] {} (E);
     \path [-] (B) edge node[left] {} (F);
     
     \path [-] (C) edge node[left] {} (D);
     \path [-] (C) edge node[left] {} (E);
     \path [-] (C) edge node[left] {} (F);
     
     \draw[dashed] (D) -- (E);
     \draw[dashed] (D) -- (F);
     
     \draw[dashed] (F) -- (E);
\end{tikzpicture}
\caption{A complete graph. Dashed Lines represent edges labeled zero regardless of arc orientation}
\label{fig: ghost tournament}
\end{figure}
\begin{proof}
It can be shown, see \cite {B2},  that any tournament with $n \leq 5$
 vertices is $(2,3)$-cordial, save for the case when $n =4$. Thus we begin with a complete graph with $n \geq 6$. Recall that the number of vertices on a complete graph is $n\choose2$. Thus $Z$ is the number of edges in two cliques comprised of the the subgraph of the vertices labeled $0$ and the vertices labeled $1$. In this way $Z$ counts the number of edges labeled zero regardless of arc orientation. If $n$ is even that will mean that $Z = 2 {\frac{n}{2}\choose2}$. If $n$ is odd then $Z$ is as above. This also implies there are ${n\choose2} - Z$ edges that cannot be labeled zero.
 
For every complete graph with $n \geq 6$ vertices $Z > 1 / 3 {n\choose 2}$. By Theorem \ref{Theor: Zeros} this is too many edges to be $(2,3)-$orientable. The number edges labeled $0$ must be balanced with the number of edges labeled $1$ and $-1$, and the number of edges labeled $1$ or $-1$ is $1/2({n\choose 2} - Z)$. Thus the upper bound on the number of edges a graph on $n$ vertices can have and be $(2,3)-$orientable is as in equation \ref{eqn: Z eqn}.
\end{proof}


\begin{thebibliography}{99}

\bibitem{B} L. B. Beasley, Cordial Digraphs, In Press.

\bibitem{B2} L. B. Beasley, M. A. Santana, J. M. Mousley and D.E. Brown,  (2,3)-Cordial Digraphs, Preprint.

\bibitem{C} I. Cahit, Cordial graphs: A weaker version of graceful and harmonious graphs, Ars Comb. 23(1987) 201-208.

\bibitem{H}  M. Hovey, A-cordial graphs, {\it Discrete Math.} 93 (1991) 183-194.

\bibitem{S} E Salehi, PC-labelling of a graph and its PC-set, Bull. Ins.t Comb. Appl., 58(2010) 112-121.

\bibitem{M} M. A. Seoud and M. A. Salim, Upper Bounds of Four Types of Graph Labelings, Ars Combinatoria, 127(2016) 271-278

\bibitem{SD} Sinah Deepa, Kuar Jaspreet, Full friendly index set -I {\it Discrete Applied Mathematics} 161 (2013) 1262- 1274.

\bibitem{PW} O. Penchenik, J. Wise, Generalized Graph Cordiality {\it Discussiones Mathematicae} Graph Theory 32 (2012) 557–567

\end{thebibliography}
\end{document}